\def\NZQ{\mathbb}               
\def\ZZ{{\NZQ Z}}
\def\RR{{\NZQ R}}
\def\CC{{\NZQ C}}
\def\frk{\mathfrak}               
\def\Phi{{\frk N}}
\def\ab{{\bold a}}
\def\eb{{\bold e}}
\def\opn#1#2{\def#1{\operatorname{#2}}} 
\opn\chara{char} 
\opn\length{\ell} 
\opn\pd{pd} 
\opn\rk{rk}
\opn\projdim{proj\,dim} 
\opn\injdim{inj\,dim} 
\opn\rank{rank}
\opn\depth{depth} 
\opn\grade{grade} 
\opn\height{height}
\opn\embdim{emb\,dim} 
\opn\codim{codim}
\opn\Tr{Tr} 
\opn\bigrank{big\,rank}
\opn\superheight{superheight}
\opn\lcm{lcm}
\opn\trdeg{tr\,deg}
\opn\reg{reg} 
\opn\lreg{lreg} 
\opn\ini{in} 
\opn\lpd{lpd}
\opn\size{size}
\opn\mult{mult}
\opn\dist{dist}
\opn\cone{cone}
\opn\lex{lex}
\opn\rev{rev}
\opn\div{div} \opn\Div{Div} \opn\cl{cl} \opn\Cl{Cl}
\opn\Spec{Spec} \opn\Supp{Supp} \opn\supp{supp} \opn\Sing{Sing}
\opn\Ass{Ass} \opn\Min{Min}
\opn\Ann{Ann} \opn\Rad{Rad} \opn\Soc{Soc}
\opn\Syz{Syz} \opn\Im{Im} \opn\Ker{Ker} \opn\Coker{Coker}
\opn\Am{Am} \opn\Hom{Hom} \opn\Tor{Tor} \opn\Ext{Ext}
\opn\End{End} \opn\Aut{Aut} \opn\id{id} \opn\ini{in}
\opn\nat{nat}
\opn\pff{pf}
\opn\Pf{Pf} \opn\GL{GL} \opn\SL{SL} \opn\mod{mod} \opn\ord{ord}
\opn\Gin{Gin}
\opn\Hilb{Hilb}\opn\adeg{adeg}\opn\std{std}\opn\ip{infpt}
\opn\Pol{Pol}
\opn\sat{sat}
\opn\Var{Var}
\opn\Gen{Gen}
\opn\aff{aff} \opn\con{conv} \opn\relint{relint} \opn\st{st}
\opn\lk{lk} \opn\cn{cn} \opn\core{core} \opn\vol{vol}
\opn\link{link} \opn\star{star}
\opn\gr{gr}
\def\Hc{{\mathcal H}}
\def\Pc{{\mathcal P}}
\def\Qc{{\mathcal Q}}
\def\pot#1#2{#1[\kern-0.28ex[#2]\kern-0.28ex]}
\opn\dirlim{\underrightarrow{\lim}}
\opn\inivlim{\underleftarrow{\lim}}
\let\to=\rightarrow
\def\Implies{\ifmmode\Longrightarrow \else
        \unskip${}\Longrightarrow{}$\ignorespaces\fi}
\def\implies{\ifmmode\Rightarrow \else
        \unskip${}\Rightarrow{}$\ignorespaces\fi}
\def\iff{\ifmmode\Longleftrightarrow \else
        \unskip${}\Longleftrightarrow{}$\ignorespaces\fi}
\newtheorem{Theorem}{Theorem}[section]
\newtheorem{Lemma}[Theorem]{Lemma}
\newtheorem{Example}[Theorem]{Example}
\let\epsilon\varepsilon
\let\phi=\varphi
\let\kappa=\varkappa
\def\qed{\ifhmode\textqed\fi
      \ifmmode\ifinner\quad\qedsymbol\else\dispqed\fi\fi}
\def\textqed{\unskip\nobreak\penalty50
       \hskip2em\hbox{}\nobreak\hfil\qedsymbol
       \parfillskip=0pt \finalhyphendemerits=0}
\def\dispqed{\rlap{\qquad\qedsymbol}}
\opn\dis{dis}
\opn\height{height}
\opn\dist{dist}
\def\pnt{{\raise0.5mm\hbox{\large\bf.}}}
\opn\Lex{Lex}
\begin{document}
\title{
Roots of Ehrhart polynomials of Gorenstein Fano polytopes
}
\author{Takayuki Hibi,
Akihiro Higashitani
and
Hidefumi Ohsugi
}
\thanks{
{\bf 2000 Mathematics Subject Classification:}
Primary 52B20; Secondary 52B12. \\
\, \, \, {\bf Keywords:}
Ehrhart polynomial, $\delta$-vector,
Gorenstein Fano polytope.
}
\address{Takayuki Hibi,
Department of Pure and Applied Mathematics,
Graduate School of Information Science and Technology,
Osaka University,
Toyonaka, Osaka 560-0043, Japan}
\email{hibi@math.sci.osaka-u.ac.jp}
\address{Akihiro Higashitani,
Department of Pure and Applied Mathematics,
Graduate School of Information Science and Technology,
Osaka University,
Toyonaka, Osaka 560-0043, Japan}
\email{sm5037ha@ecs.cmc.osaka-u.ac.jp}
\address{Hidefumi Ohsugi,
Department of Mathematics,
College of Science,
Rikkyo University,
Toshima-ku, Tokyo 171-8501, Japan} 
\email{ohsugi@rkmath.rikkyo.ac.jp}
\begin{abstract}
Given arbitrary integers $k$ and $d$ 
with $0 \leq 2k \leq d$, we construct 
a Gorenstein Fano polytope
$\Pc \subset \RR^d$ of dimension $d$ such that
(i) its Ehrhart polynomial
$i(\Pc, n)$ possesses $d$ distinct roots;
(ii) $i(\Pc, n)$ possesses exactly $2k$ imaginary roots;
(iii) $i(\Pc, n)$ possesses exactly $d - 2k$ real roots;
(iv) the real part of each of the imaginary roots
is equal to $- 1 / 2$; 
(v) all of the real roots belong to the open interval
$(-1, 0)$. 
\end{abstract}
\maketitle
%
%
%
Recently, many research papers on convex polytopes,
including
\cite{BDDPS}, \cite{BHW}, \cite{Bra}, \cite{BD}, 
\cite{HSW} and \cite{Pfe},
discuss roots of Ehrhart polynomials.
One of the fascinating topics is
the study on roots of Ehrhart polynomials
of Gorenstein Fano polytopes.

Let $\Pc \subset \RR^N$ be an integral convex polytope
of dimension $d$ and $\partial \Pc$ its boundary.
(An integral convex polytope is a convex polytope
all of whose vertices have integer coordinates.)
Given integers $n = 1, 2, \ldots$, we write 
$i(\Pc, n)$ for the number of integer points belonging to
$n \Pc$, where $n \Pc = \{ n \alpha : \alpha \in \Pc \}$.
In other words,
\[
i(\Pc, n) = | n \Pc \cap \ZZ^N |,
\, \, \, \, \,
\, \, \, \, \,
n = 1, 2, \ldots.
\]
Late 1950's Ehrhart did succeed in proving that $i(\Pc, n)$ is
a polynomial in $n$ of degree $d$ with $i(\Pc, 0) = 1$.
We call $i(\Pc, n)$ the {\em Ehrhart polynomial} of $\Pc$. 
Ehrhart's ``\,loi de r\'eciprocit\'e\,'' guarantees that 
\[
( - 1 )^d i(\Pc, - n) 
= | n (\Pc \setminus \partial \Pc) \cap \ZZ^N |, 
\, \, \, \, \,
\, \, \, \, \,
n = 1, 2, \ldots.
\]

We define the sequence
$\delta_0, \delta_1, \delta_2, \ldots$ of integers
by the formula
\begin{eqnarray*}
\label{delta}
(1 - \lambda)^{d + 1}
\left[ 1 + \sum_{n=1}^{\infty} i(\Pc,n) \lambda^n \right]
= \sum_{i=0}^{\infty} \delta_i \lambda^i.
\end{eqnarray*}
Since $i(\Pc, n)$ is
a polynomial in $n$ of degree $d$ with $i(\Pc, 0) = 1$,
a fundamental fact on generating functions guarantees that
$\delta_i = 0$ for every $i > d$.
The sequence
\[
\delta(\Pc) = (\delta_0, \delta_1, \ldots, \delta_d)
\]
is called the
{\em $\delta$-vector} of $\Pc$.
Thus $\delta_0 = 1$,
$\delta_1 = |\Pc \cap \ZZ^N| - (d + 1)$
and
$\delta_d = |(\Pc - \partial \Pc) \cap \ZZ^N|$.
Each $\delta_i$ is nonnegative
(Stanley \cite{StanleyDRCP}).
If $\delta_d \neq 0$, then
$\delta_1 \leq \delta_i$ for every $1 \leq i < d$
(\cite{HibiLBT}).
We refer the reader to \cite{Ehrhart},
\cite{HibiRedBook}, \cite{StanleyEC}
\cite{StanleyJPAA},
\cite{StanleyEJC} and  
\cite{StanleyGreenBook}
for further informations on Ehrhart polynomials
and $\delta$-vectors.

A {\em Fano polytope} is an integral convex polytope
$\Pc \subset \RR^d$ of dimension $d$ 
such that
the origin of $\RR^d$ is a unique integer point
belonging to the interior $\Pc \setminus \partial \Pc$
of $\Pc$. 
A Fano polytope is called {\em Gorenstein} 
if its dual polytope is integral.
(Recall that the dual polytope $\Pc^\vee$
of a Fano polytope $\Pc$ is the convex polytope
which consists of those $x \in \RR^d$
such that $\langle x, y \rangle \leq 1$ for all
$y \in \Pc$, where $\langle x, y \rangle$
is the usual inner product of $\RR^d$.)

Let $\Pc \subset \RR^d$ be a Fano polytope with 
$\delta(\Pc) = (\delta_0, \delta_1, \ldots, \delta_d)$
its $\delta$-vector.  
It follows from \cite{Batyrev} and \cite{Hibidual}
that the following conditions 
are equivalent:
\begin{itemize}
\item
$\Pc$ is Gorenstein;
\item
$\delta(\Pc)$ is symmetric, i.e., 
$\delta_i = \delta_{d-i}$ for every $0 \leq i \leq d$;
\item
$i(\Pc, n) = ( - 1 )^{d} i(\Pc, - n - 1)$.
\end{itemize}

Let $\Pc \subset \RR^N$ be an integral convex polytope
of dimension $d$ and $i(\Pc, n)$ its Ehrhart polynomial.
A complex number $a \in \CC$ is called a {\em root} of
$i(\Pc, n)$ if $i(\Pc, a) = 0$.
Let $\Re(a)$ denote the real part of $a \in \CC$.
An outstanding conjecture given in \cite{BDDPS} says that
every root $a \in \CC$ of $i(\Pc, n)$ satisfies 
$- d \leq \Re(a) \leq d - 1$.

When $\Pc \subset \RR^d$ is a Gorenstein Fano polytope,
since $i(\Pc, n) = ( - 1 )^{d} i(\Pc, - n - 1)$,
the roots of $i(\Pc, n)$ locate symmetrically 
in the complex plane 
with respect to the line $\Re(z) = - 1 / 2$.
Thus in particular, if $d$ is odd, then $- 1 / 2$
is a root of $i(\Pc, n)$.
It is known \cite[Proposition 1.8]{BHW}
that,
if all roots $a \in \CC$ of 
$i(\Pc, n)$ of an integral convex polytope 
$\Pc \subset \RR^d$ of dimension $d$ 
satisfy $\Re(a) = - 1 / 2$, then 
$\Pc$ is unimodular isomorphic to 
a Gorenstein Fano polytope
whose (usual) volume is at most $2^d$.


\begin{Theorem}
\label{main}
Given arbitrary nonnegative integers $k$ and $d$ 
with $0 \leq 2k \leq d$, there exists
a Gorenstein Fano polytope
$\Pc \subset \RR^d$ of dimension $d$ such that
\begin{enumerate}
\item[{\rm (i)}]
$i(\Pc, n)$ possesses $d$ distinct roots;
\item[{\rm (ii)}]
$i(\Pc, n)$ possesses exactly $2k$ imaginary roots;
\item[{\rm (iii)}]
$i(\Pc, n)$ possesses exactly $d - 2k$ real roots;
\item[{\rm (iv)}]
the real part of each of the imaginary roots
is equal to $- 1 / 2$; 
\item[{\rm (v)}]
all of the real roots belong to the open interval
$(-1, 0)$.
\end{enumerate}
\end{Theorem}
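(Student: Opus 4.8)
The plan is to realize $\Pc$ as a Cartesian product of two Gorenstein Fano polytopes, one responsible for all the imaginary roots and one for all the real roots, taking advantage of the fact that Ehrhart polynomials multiply under Cartesian products while the root multisets simply accumulate.

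The first step is a product lemma. If $\Pc_1\subset\RR^{d_1}$ and $\Pc_2\subset\RR^{d_2}$ are Gorenstein Fano, then $\Pc_1\times\Pc_2\subset\RR^{d_1+d_2}$ is again Gorenstein Fano: the interior lattice points of a product are the products of the interior lattice points, so the origin is the only one, while the dual $(\Pc_1\times\Pc_2)^\vee=\Pc_1^\vee\oplus\Pc_2^\vee$ (the free sum) is integral precisely because $\Pc_1^\vee$ and $\Pc_2^\vee$ are. Since $n(\Pc_1\times\Pc_2)\cap\ZZ^{d_1+d_2}=(n\Pc_1\cap\ZZ^{d_1})\times(n\Pc_2\cap\ZZ^{d_2})$, one has
\[ i(\Pc_1\times\Pc_2,n)=i(\Pc_1,n)\,i(\Pc_2,n), \]
so the roots of the product are exactly the union of the roots of the two factors.

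Next I would assemble the two building blocks. For the real roots, let $S_m$ be the reflexive simplex obtained by translating $(m+1)\Delta_m$, where $\Delta_m=\con\{0,e_1,\dots,e_m\}$, so that its unique interior lattice point $(1,\dots,1)$ moves to the origin; as translation by a lattice vector leaves the Ehrhart polynomial unchanged, $i(S_m,n)=\binom{(m+1)n+m}{m}$, whose roots $-1/(m+1),\dots,-m/(m+1)$ are distinct, real, and contained in $(-1,0)$. For the imaginary roots, let $\Qc_m=\con\{\pm e_1,\dots,\pm e_m\}$ be the cross-polytope; a direct computation gives $\delta_{\Qc_m}(\lambda)=(1+\lambda)^m$, so $\Qc_m$ is Gorenstein Fano, and I will show (see below) that for even $m$ all $m$ roots of $i(\Qc_m,n)$ are non-real, pairwise distinct, and have real part $-1/2$. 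Setting
\[ \Pc=\Qc_{2k}\times S_{d-2k} \]
(with $\Qc_0$ and $S_0$ understood as a point) then yields a $d$-dimensional Gorenstein Fano polytope whose $2k$ roots coming from $\Qc_{2k}$ give (ii) and (iv), whose $d-2k$ roots coming from $S_{d-2k}$ give (iii) and (v), and all of which are distinct since a non-real root can never equal a real one; this gives (i).

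The main obstacle is the cross-polytope block: proving that for even $m$ the polynomial $i(\Qc_m,n)$ has $m$ distinct roots, all on the line $\Re(z)=-1/2$. The location on the critical line is the crux. Since $\delta_{\Qc_m}(\lambda)=(1+\lambda)^m$ has all its zeros on the unit circle $|\lambda|=1$, I would invoke the known principle that a lattice polytope whose $\delta$-polynomial has all roots on the unit circle has all Ehrhart roots on $\Re(z)=-1/2$; equivalently, writing $t=n+1/2$, the Gorenstein functional equation forces $i(\Qc_m,t-1/2)=h_m(t^2)$, and the claim becomes that $h_m$ has only negative real roots. Distinctness of these roots is the remaining delicate point: I expect to obtain it by identifying the $h_m$ (up to normalization) with a Meixner--Pollaczek-type orthogonal family, or directly via a three-term recurrence in $m$, whose zeros are then real, simple, and interlace those of $h_{m-1}$. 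Once the cross-polytope block is established, the verification of (i)--(v) for $\Pc$ is immediate from the product lemma.
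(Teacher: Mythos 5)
Your construction is genuinely different from the paper's, and its skeleton is sound: the product of two Gorenstein Fano polytopes is Gorenstein Fano (your duality argument works; alternatively, $i(\Pc_1,n)\,i(\Pc_2,n)$ visibly inherits the functional equation $i(\Pc,n)=(-1)^d i(\Pc,-n-1)$ from the factors), Ehrhart polynomials do multiply over Cartesian products, and your simplex factor $S_{d-2k}$ correctly supplies the $d-2k$ distinct real roots $-i/(d-2k+1)\in(-1,0)$ --- in fact exactly the same real roots as in the paper, which instead dilates and translates a single polytope $\Qc^c$ with $\delta$-vector $(1,1,\ldots,1,0,\ldots,0)$ and never forms a product.

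The gap is in the cross-polytope factor, which is the crux of the whole theorem. Rodriguez-Villegas \cite{RV} does give that all roots of $i(\Qc_{2k},n)$ lie on $\Re(z)=-1/2$ (note this uses that the $\delta$-polynomial $(1+\lambda)^{2k}$ has full degree $2k$; if the degree were smaller, \cite{RV} would place the roots on a different vertical line). But \cite{RV} says nothing about simplicity of the roots, nor about $i(\Qc_{2k},-1/2)\neq 0$: for even dimension the functional equation does not forbid $-1/2$ from being a root, so non-reality is a genuine issue, and your reduction ``$h_m$ has only negative real roots'' actually needs \emph{negative and simple}. These two points are precisely where your text says ``I expect to obtain it by identifying \ldots or directly via a three-term recurrence,'' i.e.\ they are asserted, not proved --- and they are exactly the content of properties (i) and (ii). By contrast, the paper never meets this difficulty: because its building block has $\delta$-vector $(1,1,\ldots,1)$, its Ehrhart polynomial telescopes to $\binom{n+d+1}{d+1}-\binom{n+d-2k}{d+1}$, a difference of two products of real linear factors, and the elementary Lemma \ref{root} (a modulus comparison plus an argument count) delivers the critical line, the non-reality, and the distinctness in one stroke; the cross-polytope's $\delta$-vector $\binom{2k}{i}$ admits no such telescoping, which is what forces you into orthogonal-polynomial territory. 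Your plan is completable: the polynomials $D_m(n)=i(\Qc_m,n)$ satisfy $m D_m(n)=(2n+1)D_{m-1}(n)+(m-1)D_{m-2}(n)$, and after setting $2n+1=s\sqrt{-1}$ and $E_m(s)=(\sqrt{-1}\,)^{-m}D_m$ this becomes $mE_m=sE_{m-1}-(m-1)E_{m-2}$ with $E_0=1$, $E_1=s$, a positive-coefficient (Meixner--Pollaczek type) recurrence, so the $E_m$ have simple real zeros and $E_{2k}(0)=(-1)^k\prod_{j=1}^{k}\frac{2j-1}{2j}\neq 0$, which yields everything you need. But none of this is in your write-up, so as it stands the decisive step is missing.
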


\begin{proof}
Let $\eb_1, \ldots, \eb_d$ 
denote the canonical unit vectors
of $\RR^d$.  
Let $\Qc \subset \RR^d$ be the convex polytope
which is the convex hull of
$\eb_1, \ldots, \eb_{2k}$ and
$-(\eb_1 + \cdots + \eb_{2k})$.  Then
$\Qc$ is an integral convex polytope of
dimension $2k$ with
$\delta(\Qc) = (1, 1, \ldots, 1) \in \ZZ^{2k+1}$.
Let $\Qc^{c} \subset \RR^d$ be the convex polytope
which is the convex hull of
$\Qc \cup \{ \eb_{2k+1}, \ldots, \eb_{d} \}$. 
Then $\delta(\Qc^c) = (\delta(\Qc), 0, \ldots, 0) \in \ZZ^{d+1}$.
Hence the convex polytope
$(d - 2k + 1) \Qc^c$ possesses a unique integer point 
$\ab$ in its interior.
Now, write $\Pc \subset \RR^d$ for the integral convex polytope
$(d - 2k + 1) \Qc^c - \ab$.
Then $\Pc$ is a Gorenstein Fano polytope.
Our work is to show that $\Pc$ enjoys the required properties
(i) -- (v).

Since 
\[
\sum_{n=0}^{\infty} i(\Qc^c, n) \lambda^n
= \frac{1 + \lambda + \lambda^2 + \cdots + \lambda^{2k}}
{(1 - \lambda)^{d+1}},
\]
one has
\begin{eqnarray*}
i(\Qc^c, n) & = & \sum_{i=n-2k}^{n} {d + i \choose d} 
  =  \sum_{i=0}^{2k} {d + (n - 2k) + i \choose d} \\
& = & \sum_{i=0}^{2k} {d + n - (2k - i) \choose d} 
  =  \sum_{i=0}^{2k} {n + d - i \choose d} \\
& = & \sum_{i=0}^{2k} \left({n + d - i + 1 \choose d + 1} 
- {n + d - i \choose d + 1} \right) \\
& = &
{n + d + 1 \choose d + 1} 
- {n + d - 2k \choose d + 1} \\
& = &
\frac{1}{(d+1)!}
(\prod_{i=1}^{d - 2k}(n + i))
(\prod_{i=0}^{2k}(n + d + 1 - i) 
- \prod_{i=0}^{2k}(n - i)).
\end{eqnarray*}
Since 
\[
i(\Pc, n) = i((d - 2k + 1) \Qc^c, n)
= i(\Qc^c, (d - 2k + 1)n), 
\]
one has 
\begin{eqnarray*}
i(\Pc, n) 
& = &
\frac{(d - 2k + 1)^{d+1}}{(d+1)!}
(\prod_{i=1}^{d - 2k}(n + \frac{i}{d - 2k + 1})) F(n),
\end{eqnarray*}
where
\begin{eqnarray*}
F(n) & = & \prod_{i=0}^{2k}(n + \frac{d + 1 - i}{d - 2k + 1})
- \prod_{i=0}^{2k}(n - \frac{i}{d - 2k + 1}) \\
& = &
\prod_{i=0}^{2k}(n + \frac{d + 1 - (2k - i)}{d - 2k + 1})
- \prod_{i=0}^{2k}(n - \frac{i}{d - 2k + 1}).
\end{eqnarray*}
Now, since
\[
- \frac{d + 1 - (2k - i)}{d - 2k + 1}
< - 1 / 2 < 
\frac{i}{d - 2k + 1}
\]
and since
\[
- \frac{d + 1 - (2k - i)}{d - 2k + 1}
+ \frac{i}{d - 2k + 1} = - 1,
\]
Lemma \ref{root} below guarantees that
$F(n)$ possesses $2k$ distinct roots
and each of them
is an imaginary root with $- 1 / 2$ its real part.
Finally, the real roots of
$i(\Pc, n)$ are 
\[
- \frac{i}{d - 2k + 1}, 
\, \, \, \, \, 
\, \, \, \, \, 
1 \leq i \leq d - 2k,
\]
which belong to the open interval $(- 1, 0)$. 
\, \, \, \, \, \, \, \, \, \, 
\, \, \, \, \, \, \, \, \, \, 
\, \, \, \, \, 
\end{proof}

\begin{Lemma}
\label{root}
Let $\alpha_0, \alpha_1, \ldots, \alpha_{2k}$ 
and $\beta_0, \beta_1, \ldots, \beta_{2k}$
be rational numbers satisfying 
$\alpha_i < - 1 /2 < \beta_i$ 
and $\alpha_i + \beta_i = - 1$
for all $i$.  Let
\[
f(x) = \prod_{i=0}^{2k} (x - \alpha_i)
- \prod_{i=0}^{2k} (x - \beta_i)
\]
be a polynomial in $x$ of degree $2k$.
Then $f(x)$ possesses $2k$ distinct roots and
each of them
is an imaginary root with $- 1 / 2$ its real part.
\end{Lemma}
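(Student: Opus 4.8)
The plan is to convert the symmetry hypothesis $\alpha_i+\beta_i=-1$ into a clean normalization, then locate the roots by comparing moduli. First I would substitute $x=y-\frac{1}{2}$, which centers everything on the line $\Re(z)=-1/2$. Setting $a_i=-\frac{1}{2}-\alpha_i$, the hypotheses $\alpha_i<-1/2$ and $\alpha_i+\beta_i=-1$ become $a_i>0$, together with $\alpha_i=-\frac{1}{2}-a_i$ and $\beta_i=-\frac{1}{2}+a_i$, so that $x-\alpha_i=y+a_i$ and $x-\beta_i=y-a_i$. Hence $f(x)=g(y)$, where
\[
g(y) = \prod_{i=0}^{2k}(y + a_i) - \prod_{i=0}^{2k}(y - a_i), \qquad a_i > 0 .
\]
Since $y$ purely imaginary corresponds exactly to $x=y-\frac{1}{2}$ having real part $-1/2$, the assertion becomes: $g$ has $2k$ distinct roots, all purely imaginary and all nonzero.

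For the location of the roots, I would suppose $y=u+iv$ satisfies $g(y)=0$, i.e.\ $\prod(y+a_i)=\prod(y-a_i)$, and take moduli. Using $|y+a_i|^2-|y-a_i|^2=4ua_i$, the case $u>0$ forces $|y+a_i|>|y-a_i|$ for every $i$ (because $a_i>0$), hence $\prod|y+a_i|>\prod|y-a_i|$, a contradiction; the case $u<0$ is ruled out symmetrically. Therefore $u=0$ and every root is purely imaginary. That no root equals $0$ is immediate from $g(0)=\prod a_i-\prod(-a_i)=2\prod a_i\neq 0$ (using that $2k+1$ is odd), so after shifting back each root is a genuine imaginary number with real part $-1/2$, giving conditions (ii) and (iv) of the theorem.

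The remaining point, distinctness of the $2k$ roots, is the one I expect to be subtler. Writing $P(y)=\prod(y+a_i)$ and $Q(y)=\prod(y-a_i)$, a repeated root $y_0$ would satisfy both $P(y_0)=Q(y_0)$ and $P'(y_0)=Q'(y_0)$. Since $y_0$ is purely imaginary and nonzero while each $a_i$ is a positive real, $y_0\pm a_i\neq 0$, so $P(y_0)=Q(y_0)\neq 0$; dividing the two equations then yields $\sum_i \frac{1}{y_0+a_i}=\sum_i\frac{1}{y_0-a_i}$, which collapses to $\sum_i \frac{a_i}{y_0^2-a_i^2}=0$. Writing $y_0=iv$ gives $y_0^2-a_i^2=-(v^2+a_i^2)$, so the identity reads $\sum_i \frac{a_i}{v^2+a_i^2}=0$; but every summand is strictly positive, a contradiction. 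Hence $g$, and therefore $f$, has $2k$ distinct roots. The entire argument rests on the single structural fact $a_i>0$, which simultaneously confines the roots to the critical line and forbids any coincidence among them.
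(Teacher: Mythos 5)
Your proof is correct. The first half --- the substitution centering everything at $-1/2$ and the modulus comparison $|y+a_i|^2-|y-a_i|^2=4ua_i$ forcing every root onto the imaginary axis, plus $g(0)=2\prod_i a_i\neq 0$ using that $2k+1$ is odd --- is essentially identical to the paper's argument (the Rodriguez-Villegas technique; the paper phrases it before the substitution, comparing $|a-\alpha_i|$ with $|a-\beta_i|$). Where you genuinely diverge is the distinctness step. The paper stays on the imaginary axis and counts roots directly: writing $\theta_i(b)$ for the argument of $\gamma_i+b\sqrt{-1}$, it shows $b\sqrt{-1}$ is a root iff $\sum_i\theta_i(b)\equiv\pi/2 \pmod{\pi}$, and since $h(b)=\sum_i\theta_i(b)$ is strictly increasing from $-(k+1)\pi+\pi/2$ to $k\pi+\pi/2$, this congruence has exactly $2k$ solutions; this produces the $2k$ distinct roots constructively, with existence and simplicity in one stroke. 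You instead invoke the fundamental theorem of algebra to know $g$ has $2k$ roots with multiplicity, and rule out a repeated root via the logarithmic derivative: $P(y_0)=Q(y_0)\neq 0$ and $P'(y_0)=Q'(y_0)$ give $\sum_i a_i/(y_0^2-a_i^2)=0$, which is impossible at a purely imaginary $y_0$ since every term $-a_i/(v^2+a_i^2)$ is strictly negative. Your route is shorter and purely algebraic, avoiding any trigonometric bookkeeping; the paper's route yields slightly more information (it locates the roots as the solutions of an explicit monotone equation, which also pins down how they are distributed along the line). Both arguments rest on the same structural fact, the positivity of the $a_i$ (the paper's $\gamma_i$). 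One cosmetic slip: at the end you refer to ``conditions (ii) and (iv) of the theorem,'' but what is being proved here is the lemma; the theorem's conditions follow only after the lemma is applied to $F(n)$.
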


\begin{proof}
We employ a basis technique appearing in \cite{RV}.
Let $a \in \CC$
with $\Re(a) > - 1 / 2$.
Since $\alpha_i < - 1 /2 < \beta_i$
and $(\alpha_i + \beta_i) / 2 = - 1 / 2$, 
it follows that
$|a - \alpha_i| > |a - \beta_i|$.
Thus $\prod_{i=0}^{2k} |a - \alpha_i|
> \prod_{i=0}^{2k} |a - \beta_i|$.
Hence $f(a) \neq 0$.
Similarly, 
if $a \in \CC$ with $\Re(a) < - 1 / 2$, then
$|a - \alpha_i| < |a - \beta_i|$
for all $i$.  Thus 
$\prod_{i=0}^{2k} |a - \alpha_i|
< \prod_{i=0}^{2k} |a - \beta_i|$.
Hence $f(a) \neq 0$.
Consequently, all roots $a \in \CC$ of $f(x)$
satisfy $\Re(a) = - 1 / 2$.

%
%
%

Substituting $y = x + 1 / 2$ and 
$\gamma_i = \beta_i + 1 / 2$
in $f(x)$, it follows 
that each of the roots $a \in \CC$
of the polynomial 
\[
g(y) = \prod_{i=0}^{2k} (\gamma_i + y)
+ \prod_{i=0}^{2k} (\gamma_i - y)
\]
in $y$ of degree $2k$
satisfied $\Re(a) = 0$.  
Since $\gamma_i > 0$, one has $g(0) \neq 0$.
Hence $g(y)$ possesses no real root.  Thus
all roots of $f(x)$ are imaginary roots.

What we must prove is that
$g(y)$ possesses $2k$ distinct roots.
Let $b \in \RR$ and
$\theta_i(b)$ the argument of
$\gamma_i + b\sqrt{-1}$,
where $- \pi / 2 < \theta_i(b) < \pi / 2$.
Then $b\sqrt{-1}$ is a root of $g(y)$ 
if and only if
\[
\prod_{i=0}^{2k} e^{\sqrt{-1}\,\theta_i(b)}
= - \prod_{i=0}^{2k} e^{- \sqrt{-1}\,\theta_i(b)}.
\]
In other words, $b\sqrt{-1}$ is a root of $g(y)$ 
if and only if
\[
\prod_{i=0}^{2k} e^{2 \sqrt{-1}\,\theta_i(b)}
= - 1,
\]
which is equivalent to saying that
\[
\sum_{i=0}^{2k} \theta_i(b) \equiv \frac{\pi}{2}
\, \, \, \, \, 
(\mod \, \, \pi).
\]
Now, we study the function
$h(y) = \sum_{i=0}^{2k} \theta_i(y)$ with $y \in \RR$.
Since $\gamma_i > 0$, 
it follows that $h(y)$ is strictly increasing with
\[
\lim_{y \to \infty} h(y) = k \pi + \pi / 2,
\, \, \, \, \, 
\lim_{y \to - \infty} h(y) = - (k + 1) \pi + \pi / 2.
\]
Hence the equation 
\[
h(y) \equiv \frac{\pi}{2}
\, \, \, \, \, 
(\mod \, \, \pi)
\]
possesses $2k$ distinct real roots, as desired.
\, \, \, \, \, \, \, \, \, \, 
\, \, \, \, \, \, \, \, \, \, 
\, \, \, \, \, 
\end{proof}

\begin{Example}
{\em
Let $G$ be a finite connected graph on the vertex set
$V(G) = \{ 1, \ldots, n \}$ with $E(G)$
its edge set. 
We assume that $G$ possesses no loop and no multiple edge.
Let $\eb_1, \ldots, \eb_d$
denote the canonical unit vectors of $\RR^d$.
For an edge $e = \{ i, j \}$ of $G$ with $i < j$,
we define $\rho(e)$ and $\mu(e)$ of $\RR^d$
by setting $\rho(e) = \eb_i - \eb_j$ and
$\mu(e) = \eb_j - \eb_i$.
Write $\Pc_G \subset \RR^d$ for the convex polytope
which is the convex hull of
$\{ \rho(e) : e \in E(G) \} 
\cup \{ \mu(e) : e \in E(G) \}$.
Let $\Hc \subset \RR^d$ denote the hyperplane
defined by the equation $\sum_{i=1}^{d} x_i = 0$.
Then $\Pc_G \subset \Hc$.
Identifying $\Hc$ with $\RR^{d - 1}$, 
it turns out that $\Pc \subset \RR^{d-1}$
is a Fano polytope. 
It then follows from the theory of unimodular matrices
(Schrijver \cite{Sch}) 
that $\Pc_G \subset \RR^{d-1}$ is a Gorenstein Fano polytope.
One of the research problems is to find a combinatorial
characterization of the finite graphs $G$ for which
all root $a \in \CC$ of $i(\Pc_G, n)$ satisfy
$\Re(a) = - 1 / 2$.

For example, 
if $C$ is a cycle of length $6$, then
all roots $a \in \CC$ of $i(\Pc_C, n)$ satisfy
$\Re(a) = - 1 / 2$.
However, 
if $C$ is a cycle of length $7$, then
there is a root $a \in \CC$ of $i(\Pc_C, n)$ with
$\Re(a) \neq - 1 / 2$.

If $G$ is a tree, 
then $\Pc_G$ is unimodular isomorphic to
the regular unit crosspolytope 
which is the convex hull of 
$\{ \pm \eb_1, \ldots, \pm \eb_d \}$
in $\RR^d$.
Hence the $\delta$-vector of $\Pc_G$ is
$\delta(\Pc_G) 
= ({d \choose 0}, {d \choose 1}, \ldots, {d \choose d})$.
Thus by using \cite{RV} again
all roots $a \in \CC$ of $i(\Pc_G, n)$
satisfy $\Re(a) = - 1 / 2$.

Let $G$ be a complete bipartite graph 
of type $(2, d - 2)$.
Thus the edges of $G$ are either $\{ 1, j \}$ 
or $\{ 2, j \}$ with $3 \leq j \leq d$.  
Let $\delta(\Pc) 
= (\delta_0, \delta_1, \ldots, \delta_d)$.
Then
\[
\sum_{k=0}^{d} \delta_k x^k
= (1 + x)^{d - 3} (1 + 2(d - 3)x + x^2).
\]
It has been conjectured that all roots $a \in \CC$ of
$i(\Pc_G, n)$ satisfy $\Re(a) = - 1 / 2$.   
}
\end{Example}

\end{document}